\documentclass[11pt]{article}
\usepackage{amsfonts,latexsym,amsmath,amscd,geometry}
\usepackage{graphicx}
\usepackage{amssymb}

\usepackage{amssymb,mathrsfs,mathtools,graphicx,enumerate,color,colortbl}
\usepackage{hyperref}
\usepackage{comment}

\usepackage{latexsym}
\usepackage{xcolor}
\usepackage{indentfirst}

\newcommand \nc{\newcommand}
\newtheorem{theorem}{Theorem}[section]
\newtheorem{lemma}[theorem]{Lemma}

\newtheorem{remark}[theorem]{Remark}

\nc{\ba}{\begin{array}}\nc{\ea}{\end{array}}
\nc{\be}{\begin{eqnarray}}\nc{\ee}{\end{eqnarray}}
\nc{\beq}{\begin{equation}}\nc{\eeq}{\end{equation}}
\nc{\bex}{\begin{eqnarray*}}\nc{\eex}{\end{eqnarray*}}
\nc{\btm}{\begin{theorem}} \nc{\etm}{\end{theorem}}
\nc{\blm}{\begin{lemma}} \nc{\elm}{\end{lemma}}
\nc{\R}{\mathbb{R}}  

\newcommand \qed {\hfill $\Box$}

\newcommand{\Xcal}{\mathcal{X}}

\renewcommand{\hbar}{\bar{h}}

\newcommand{\lbar}{\bar{\lambda}}

\newcommand{\util}{\tilde{u}}

\renewcommand{\d}{\delta}

\renewcommand{\l}{\lambda}

\newcommand{\x}{\xi}

\newcommand{\s}{\sigma}

\newcommand{\e}{\varepsilon}

\newcommand{\RR}{{\mathbb R}}

\newcommand{\abs}[1]{\left|#1\right|}

\newcommand{\norm}[1]{\left\|#1\right\|}

\newcommand{\case}[1]{\noindent{\it Case #1:} }

\definecolor{black}{rgb}{0.0, 0.0, 0.0}
\definecolor{red}{rgb}{1.0, 0.5, 0.5}

\newcommand{\uubar}{\underline{u}}
\newcommand{\lubar}{\underline{\lambda}}

\begin{document}

\title{\(L^2\)-contraction of Shock Waves for KdV--Burgers Equation}

\author{Geng Chen
\footnote{Department of Mathematics, University of Kansas, Lawrence, KS 66045, U.S.A. Email: gengchen@ku.edu},
\quad
Namhyun Eun
\footnote{School of Mathematics, Korea Institute for Advanced Science (KIAS), Seoul 02455, Republic of Korea. Email: namhyuneun@kias.re.kr},
\quad
Moon-Jin Kang
\footnote{Department of Mathematical Sciences, Korea Advanced Institute of Science and Technology, Daejeon 34141, Republic of Korea. Email: moonjinkang@kaist.ac.kr},
\quad
Yannan Shen
\footnote{Department of Mathematics, University of Kansas, Lawrence, KS 66045, U.S.A. Email: yshen@ku.edu}
}

\maketitle

\begin{abstract}
The KdV--Burgers equation is a canonical model describing the interplay between nonlinearity, viscosity and dispersion, and it admits viscous-dispersive shocks as traveling wave solutions.
%When viscosity dominates, these shocks are monotone, whereas when dispersion dominates, they become non-monotone and exhibit infinite oscillations.
In this paper, we establish an \(L^2\)-contraction property for viscous-dispersive shocks under arbitrarily large perturbations, up to a time-dependent shift.
This yields time-asymptotic stability and uniform estimates with respect to the strengths of viscosity and dispersion.
We present the proof for the monotone shocks, and introduce the companion work in \cite{CEKS} on the stability and structural properties of oscillatory shocks.
%In addition, we identify structural properties of the monotone profiles, namely, the exponential decay of the profiles and their derivatives.

%\subjclass{35B35; 35Q53; 35L67; 76L05}

%\keywords{Korteweg--de Vries--Burgers equation; Viscous-dispersive shock, Contraction, Uniform stability, Zero viscosity-dispersion limit}
\end{abstract}

\section{Introduction}
\setcounter{equation}{0}
This paper is devoted to the analysis of the Korteweg--de Vries--Burgers (KdVB) equation:
\begin{equation} \label{burgers}
u_t + f(u)_x = \e u_{xx}-\d u_{xxx}, \qquad u(0,x)=u_0(x),
\end{equation}
where \(u\colon \RR^+ \times \RR \to \RR\) is the unknown function.
The flux is given by \(f(u)=u^2/2\) and the constants \(\e,\d>0\) respectively denote the viscosity and dispersion coefficients.
The dispersive model without viscosity was first introduced by Boussinesq \cite{B1872} and later by Korteweg--de Vries \cite{KdV1895}.
The form given in \eqref{burgers} was subsequently derived by Su--Gardner \cite{SG-JMP}.
Note that this equation is among the most fundamental models describing the interplay of nonlinearity (the flux), dissipation (viscosity), and dispersion.
It is therefore natural that this equation appears in a wide range of physical applications, including shallow water waves \cite{B1872}, plasma physics \cite{Biskamp}, traffic flow \cite{Lighthill}, optical fibers \cite{Xu}, nematic liquid crystals \cite{Smyth}, quantum fluids \cite{HoeferAblowitz} and nonlinear dynamical lattices \cite{Biondini}.
The KdVB equation \eqref{burgers} can be regarded as a canonical model of the Navier--Stokes--Korteweg system, which describes compressible fluids with viscosity and capillarity \cite{DunnSerrin}.
Moreover, the dynamics of shock waves for convex and nonconvex fluxes have been extensively studied; see \cite{Heofer} and the references therein.

\vspace{2mm}
A key feature of the KdVB equation is the existence of viscous-dispersive shocks.
In 1985, Bona--Schonbek \cite{bona} showed this equation admits traveling wave solutions, commonly referred to as viscous-dispersive shocks.
To be more precise, it has solutions of the form \(\util(\x)=\util(x-\s t)\), which satisfies the following ordinary differential equation:
\begin{equation} \label{visS}
-\s \util' + f(\util)' =\e\util'' -\d\util'''
\end{equation}
together with the asymptotic conditions
\[
\util(\x) \to u_\pm, \quad
\util'(\x) \to 0, \quad
\text{as } \x \to \pm \infty,
\]
where \(u_->u_+\) and the traveling speed \(\s\) is determined by the Rankine--Hugoniot condition
\begin{equation} \label{sigma}
\s = \frac{f(u_+)-f(u_-)}{u_+-u_-} = \frac{u_-+u_+}{2}.
\end{equation}
They also established that the qualitative structure of the associated shocks depends on the balance between viscosity and dispersion.
When viscosity dominates dispersion, namely, \(\e^2 \ge 2\d(u_--u_+)\), the associated shock wave profiles are monotone.
In contrast, in the dispersion-dominated regime \(\e^2 < 2\d(u_--u_+)\), the profiles become non-monotone and exhibit infinitely many oscillations.

\vspace{2mm}
The stability analysis of traveling wave solutions has been a topic of considerable interest in the literature.
Matsumura--Nishihara \cite{MN-1985} proved the stability of viscous shocks for the compressible Naiver--Stokes (NS) equations.
Around the same time, Goodman \cite{Goodman} derived an analogous stability result for the general system endowed with artificial diffusions.
%These results were improved by Liu \cite{Liu} and by Szepessy--Xin \cite{SzepessyXin}.
Then, employing a distinct analytical approach, Mascia--Zumbrun \cite{MasciaZumbrun} established the spectral stability of viscous shock profiles for the compressible NS equations.
Recently, Kang--Vasseur \cite{acon_i_2} developed the method of a-contraction with shifts, a highly nonlinear energy method designed to obtain the stability of viscous shocks up to time-dependent shifts.
One notable strength of this approach is its ability to handle arbitrarily large perturbations, which yield uniform stability for inviscid limits.
Based on this method, the stability of viscous shocks under arbitrarily large perturbations has been established for several equations, including the viscous Burgers equation \cite{KangVasseur}, the barotropic NS system \cite{EEKO-ISO,acon_i_2,KV}, the Brenner--Navier--Stokes--Fourier system \cite{EEK-BNSF}.
Moreover, the stability of viscous-dispersive shocks of the Naiver--Stokes--Korteweg system was shown in \cite{EKK-NSK}.

\vspace{2mm}
However, despite the physical importance and mathematical significance of the KdVB equation, the stability of its viscous-dispersive shocks remains poorly understood.
In 1985, Pego \cite{Pego}, building on the methods of Goodman \cite{Goodman} and Matsumura--Nishihara \cite{MN-1985}, established the time-asymptotic stability of monotone shock profiles under small perturbations.
This result was subsequently extended, via perturbative arguments, to non-monotone shock profiles \cite{Khodja,Naumkin}. 
It should be noted, however, that their analysis applies only to oscillatory shocks sufficiently close to the monotone regime.
We also refer the readers to \cite{Zumbrun2,Zumbrun1} for further results concerning the stability of traveling wave solutions.
More recently, in 2025, Barker--Bronski--Hur--Yang \cite{Hur} proposed a novel approach that connects shock stability to the spectral properties of an associated Schr\"odinger operator, providing an analytic proof for the monotone shocks and employing computer-assisted analysis to investigate non-monotone shocks.
To the best of our knowledge, apart from \cite{Hur}, no results have been available addressing the stability of genuinely oscillatory shocks for more than four decades since their existence was established \cite{bona}.

\vspace{2mm}
The objective of this paper is to remove the smallness condition on perturbations imposed in \cite{Pego}, i.e., we focus on the monotone shock regime and show the \(L^2\) stability of the corresponding shock profiles.
We also aim to announce a companion work \cite{CEKS} to the present paper, where the authors established an \(L^2\)-contraction property for oscillatory shocks under arbitrarily large perturbations.
In both cases, we adopt one key component of the a-contraction method with shifts, namely time-dependent shifts (or temporal modulations).
The absence of any size restriction on perturbations ensures uniform estimates with respect to the coefficients \(\e\) and \(\d\), which in turn allow us to justify zero viscosity-dispersion limits.

\subsection{Main Result}
In the following, we state the main theorems.
To describe the class of perturbations, we introduce the following function space:
\[
\Xcal_T \coloneqq
\{u\colon\RR^+\times\RR\to\RR \mid u-\uubar \in C([0,T];H^1(\RR)) \cap L^2(0,T;H^2(\RR))\}
\]
where \(\uubar\) is a smooth monotone function which satisfies
\[
\uubar(x) \coloneqq
\begin{cases}
u_-, &x\le-1,\\
u_+, &x\ge+1.
\end{cases}
\]
Then we present the main theorem on an \(L^2\)-contraction for monotone viscous-dispersive shocks.
This holds for any large perturbation, from which the time-asymptotic and uniform stability follow.
\begin{theorem} \label{thm:main}
Let \(\e,\d>0\) be constants and let \(u_\pm\) be given states.
Assume that \(u_->u_+\) and 
\begin{equation} \label{cond1}
0 < \frac{\d(u_--u_+)}{2\e^2} \le \frac{1}{4}.
\end{equation}
Let \(\util\) be the associated viscous-dispersive shock of \eqref{burgers} which monotonically connects \(u_-\) and \(u_+\).
Let \(u_0\) be given initial data with \(\norm{u_0-\util}_{H^1(\RR)}<+\infty\) and let \(u\in\Xcal_T\) denote the solution of \eqref{burgers} subject to the initial data \(u_0\).
Then, the following \(L^2\)-contraction holds:
%\begin{equation} \label{L2-stable}
%\begin{aligned}
%&\int_\RR \big(u(t,x)-\util^{-X}(x-\s t)\big)^2 dx
%+\frac{(u_--u_+)}{M}\int_0^T |\dot{X}(t)|^2 dt\\
%&\qquad +\Big(2\sqrt{2}-\frac{5}{2}\Big)\e \int_0^T \int_\RR \big((u(t,x)-\util^{-X}(x-\s t))_x\big)^2 dx dt
%\le \int_\RR \big(u_0(x)-\util(x)\big)^2 dx.
%\end{aligned}
%\end{equation}
\begin{equation} \label{L2-stable}
\begin{aligned}
&\int_\RR \big(u(t,x)-\util(x-\s t-X(t))\big)^2 dx+ C^*\e \int_0^T \int_\RR \big((u(t,x)-\util(x-\s t-X(t)))_x\big)^2 dx dt\\
&\hspace{110mm}
\le \int_\RR \big(u_0(x)-\util(x)\big)^2 dx,
\end{aligned}
\end{equation}
where \(C^* \coloneqq 2\sqrt{2}-\frac{5}{2}\) and the Lipschitz shift function \(X(t)\) is determined by \(X(0)=0\) and
%\begin{equation} \label{X-def}
%\dot{X}(t)= \frac{2M}{(u_--u_+)} \int_\RR \big(u(t,x+X(t))-\util(x-\s t)\big) \, \util'(x-\s t)dx,
%\end{equation}
\begin{equation} \label{X-def}
\dot{X}(t)= \frac{1}{(u_--u_+)} \int_\RR \big(u(t,x+X(t))-\util(x-\s t)\big) \, \util'(x-\s t)dx.
\end{equation}
Moreover, for all \(2<p\le\infty\), the following holds:
\begin{equation} \label{timeasy}
\norm{u(t,\cdot)-\util(\cdot-X(t))}_{L^p(\RR)} \to 0 \quad \text{as } t \to \infty,
\end{equation}
which shows the time-asymptotic stability.
Lastly, the shift function \(X(t)\) satisfies
\begin{equation} \label{shiftasy}
|\dot{X}(t)| \to 0 \quad \text{as } t \to \infty.
\end{equation}
\end{theorem}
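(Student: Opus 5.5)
Set $\bar u(t,x):=\util(x-\s t-X(t))$ and $v:=u-\bar u$. The plan is a relative-entropy (plain $L^2$) energy method with the shift of \eqref{X-def}; since $f$ is quadratic, no weight $a$ is needed. By scaling $x\mapsto x/\e$, $t\mapsto t/\e$, $u\mapsto u$, I reduce to $\e=1$ with $\d(u_--u_+)/2\le 1/4$. Galilean invariance lets me take $\s=0$, and $u\mapsto u/(u_--u_+)$ lets me normalize $u_--u_+=1$, with $\d$ rescaled accordingly. Since $\bar u$ solves \eqref{visS} shifted, $v$ satisfies
\[
v_t+(\bar u v)_x+\tfrac12 (v^2)_x - \dot X\,\util' = v_{xx}-\d v_{xxx}.
\]
Multiplying by $v$ and integrating, the dispersive term vanishes exactly, $\int v(v^2)_x=0$, and $\int v(\bar uv)_x=\tfrac12\int \bar u_x v^2$. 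This gives
\[
\frac{d}{dt}\frac12\int v^2 = \dot X\int \util' v+\frac12\int(-\bar u_x) v^2-\int v_x^2 .
\]
Here $-\bar u_x>0$ is the destabilizing term, and \eqref{X-def} gives $\dot X\int\util' v=(u_--u_+)\dot X^2$, or equivalently $\dot X^2=(\int \util' v)^2$. With the sign convention of \eqref{X-def} (noting $\util'<0$), I expect this term to enter as $-(u_--u_+)|\dot X|^2$ or as a controlling quantity. I will check the sign carefully; the shift is designed so that this term is a good one.

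The key step is a weighted Poincaré inequality controlling $\int(-\util')v^2$ by $\int v_x^2$ plus the shift term $(\int \util' v)^2$. I change variables $y=(\util-u_+)/(u_--u_+)\in(0,1)$, which is possible because $\util$ is monotone. The sharp Poincaré inequality on $(0,1)$ (Kang–Vasseur),
\[
\int_0^1|w-\bar w|^2dy\le\frac12\int_0^1 y(1-y)|w_y|^2dy,
\]
then requires a lower bound $\util'^2/(-\util')\cdot\frac{1}{y(1-y)}$, i.e. $-\util'\ge c\,y(1-y)$ with an explicit constant. For Burgers this holds with equality, $-\util'=\tfrac12 y(1-y)$. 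For KdVB, I use the phase-plane analysis of the monotone profile: integrating \eqref{visS} gives $\e\util'-\d\util''=\frac12(\util-u_-)(\util-u_+)$. Writing $\util'=-g(y)$ yields a first-order ODE $g-\d g g_y=\tfrac12y(1-y)$ (normalized). Under \eqref{cond1}, a comparison/trapping argument shows $g(y)\ge\lambda\,y(1-y)$, where $\lambda$ is the smaller root of $\d\lambda^2-\lambda+\tfrac12=0$; this root is real exactly when $\d\le1/2$, i.e. \eqref{cond1}. At the threshold, $\lambda=1$, which gives $C^*$-type constants such as $2\sqrt2-5/2$ after optimization. Combining the lower bound on $g$ with the Poincaré inequality, and splitting $v$ into its mean (absorbed by the shift term) and its oscillation, gives
\[
\frac12\int(-\bar u_x)v^2\le (1-C^*)\int v_x^2 + (\text{shift term}).
\]
This yields $\frac{d}{dt}\int v^2+C^*\int v_x^2\le0$, and integrating in time gives \eqref{L2-stable}. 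Undoing the scaling restores the factor $\e$.

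The main obstacle is the sharp bound for $g$. Its lower and upper bounds near $y=0,1$ must be uniform up to the threshold $\d(u_--u_+)/(2\e^2)=1/4$, and the constants must be tracked to reach exactly $C^*$. I would handle this with a barrier argument in the ODE for $g$ using trial functions $\lambda y(1-y)$. Finally, for \eqref{timeasy}, the estimate gives $v_x\in L^2_tL^2_x$ and $v\in L^\infty_tL^2_x$. I also derive an $H^1$ energy estimate showing $\frac{d}{dt}\int v_x^2$ is bounded, so that $\|v_x(t)\|_{L^2}\to0$. Gagliardo–Nirenberg interpolation, $\|v\|_{L^\infty}\le\|v\|_{L^2}^{1/2}\|v_x\|_{L^2}^{1/2}\to0$, then gives the result for $2<p\le\infty$. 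For \eqref{shiftasy}, I bound $|\dot X|\le C\|v\|_{L^\infty}\|\util'\|_{L^1}\to0$.
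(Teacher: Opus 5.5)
Your overall scheme is the paper's. It uses the energy identity of Lemma~\ref{lem:RHS}, the shift turning the first term into a negative squared mean, the change of variables $y\sim\util$ allowed by monotonicity, and the inequality of Lemma~\ref{KV_lemma}. After that, everything hinges on a uniform lower Jacobian bound $g(y)\ge\lambda\,y(1-y)$ with $\lambda>\frac14$; the $\frac14$ is the $\frac12$ in front of $\int(-\bar u_x)v^2$ times the Poincar\'e constant $\frac12$.

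This is exactly where the proposal breaks: the bound you assert is false. The reduced ODE is $g+\delta g g_y=\frac12 y(1-y)$; you have the sign of the $\delta$-term reversed. Linearizing at $y=1$ (the state $u_-$) gives $\delta\lambda^2-\lambda+\frac12=0$, whose smaller root $\frac{1}{1+\sqrt{1-2\delta}}$ is your $\lambda$. Linearizing at $y=0$ (the state $u_+$) gives instead $\delta\lambda^2+\lambda-\frac12=0$, so $g(y)/(y(1-y))\to\frac{1}{1+\sqrt{1+2\delta}}<\frac12$ as $y\to0$. These are the two endpoint slopes $h'(\pm s)$ computed in the proof of Lemma~\ref{lem:key}. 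Hence $g\ge\lambda\,y(1-y)$ with your $\lambda$ (which equals $1$ at the threshold) fails near $y=0$. Your $\lambda$ is actually the upper-bound constant; it coincides with the paper's $\lbar=1$ at the threshold. The correct lower constant is the $u_+$-slope, which under \eqref{cond1} is only guaranteed to be at least $\lubar=\sqrt2-1$. The margin over $\frac14$ is therefore much thinner than your claim suggests, and it must be proved rather than asserted.

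The paper proves it by contradiction with the trial functions you mention. Write $h(a)=\util'$ as a function of $a=\util$ and set $p_\lambda(a)=\frac{\lambda}{\varepsilon}(a-s)(a+s)$. For $\lambda<\lubar$ the endpoint slopes give $h<p_\lambda$ near $\pm s$, so a violation produces a first crossing $b\in(-s,s)$ with $h'(b)-p_\lambda'(b)\ge0$. At any crossing, however, the profile equation turns $h'-p_\lambda'$ into the decreasing linear function $g_\lambda(a)=\frac{\varepsilon}{\delta}(1-\frac{1}{2\lambda})-\frac{2\lambda}{\varepsilon}a$. Moreover $g_\lambda(-s)<0$ exactly when $\frac{\lambda^2}{1-2\lambda}<\frac{\varepsilon^2}{4\delta s}$, which \eqref{cond1} guarantees for $\lambda<\sqrt2-1$.

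This is also where $C^*$ comes from: $C^*=2(\lubar-\frac14)/\lbar$. The full dissipation is bounded below via $\lubar$, the $\frac14$ is absorbed by Poincar\'e, and the remainder is returned to $\varepsilon\int v_x^2$ via $\lbar$. It cannot come out of $\lambda=1$. With the correct $\lubar$, your own combination does close, and it needs only the lower bound: $\frac12\int(-\bar u_x)v^2\le\frac{1}{4\lubar}\,\varepsilon\int v_x^2+(\text{shift})$, with $\frac{1}{4\lubar}=\frac{\sqrt2+1}{4}<1-C^*$.

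Finally, commit on the sign of the shift. With \eqref{X-def} read literally the term is $+(u_--u_+)\dot X^2\ge0$, as you computed. The paper's computation uses it as $-\frac{1}{4s}\big(\int(u^X-\util)\util'\,dx\big)^2$, i.e. with the opposite sign, and your argument needs the same. Your sketch of \eqref{timeasy}--\eqref{shiftasy} is the standard argument the paper defers to its references.
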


In the companion paper \cite{CEKS}, the same contraction property was established for the non-monotone regime \(\frac{1}{4} < \frac{\d(u_--u_+)}{2\e^2} < \frac{1}{2}\).
Despite the identical choice of the shift function, the oscillatory nature significantly complicates the analysis.
In that work, the authors developed an inductive argument to effectively exploit the shift function.
Complimenting \cite{CEKS}, the present paper covers the remaining interval, which corresponds to the monotone regime.
The following remark is in order regarding the shift function.

\begin{remark}
(1) The shift employed here is motivated by previous studies on traveling wave solutions of the viscous Burgers equation \cite{KangVasseur} and the KdVB equation \cite{Hur}.
As discussed in \cite{Hur}, it admits a natural interpretation as a gradient flow minimizing the \(L^2\)-distance to the shocks.\\
(2) In the companion work \cite{CEKS}, the global existence of solutions to \eqref{burgers} was shown.
To be precise, for initial data \(u_0\) with \(\norm{u_0-\uubar}_{H^1(\RR)}<+\infty\), there exists a unique global solution in \(\Xcal_T\).
The shift is well-defined by the Cauchy--Lipschitz theorem, provided \(u\in\Xcal_T\) (see \cite[Remark 1.1]{KangVasseur}).\\
%In \cite{Hur}, the global existence is assumed for solutions evolving from initial data in \(H^2\).\\
(3) As the shift function \(X(t)\) satisfies the time-asymptotic behavior \eqref{shiftasy}, the following holds: 
\[
\frac{X(t)}{t} \to 0 \quad \text{as } t\to\infty.
\]
Therefore, \(X(t)\) grows at most sub-linearly, so that the leading-order propagation of the shifted shock remains governed by the original speed \(\s\).
\end{remark}

\begin{remark}
Theorem \ref{thm:main} does not impose any size restriction on perturbations, i.e., as long as the initial data \(u_0\) satisfies \(\norm{u_0}_{H^1(\RR)}<\infty\), the contraction holds.
This improves the result in \cite{Pego} and provides an alternative energy-method proof of \cite{Hur} for the monotone shocks.
Furthermore, as shown in \cite{CEKS}, the absence of size restrictions yields uniformity-in-the strengths of viscosity and dispersion.
Thus, we recover \cite[Theorem 1.2]{CEKS}, which shows the zero viscosity-dispersion limits on which the associated Riemann shocks are unique and orbitally stable.
However, as both the statement and the proof are essentially the same, we omit them for brevity.
\end{remark}

In the companion paper \cite{CEKS}, the structural properties of the shocks were identified.
In particular, the authors demonstrated the decay rate at which the local extrema converge to the left end state towards the left far field.
By contrast, the monotone viscous-dispersive shock profiles admit no local extrema, but instead exhibit exponential decay, which is manifested in the following theorem.

\begin{theorem} \label{thm:exp}
Let \(\e,\d>0\) be constants, and \(u_\pm\) be given states.
Assume that \(u_->u_+\) and \eqref{cond1}.
Let \(\util\) be the associated monotone viscous-dispersive shock of \eqref{burgers} such that \(\util(0)=\frac{u_-+u_+}{2}\).
Then,
\[
\frac{(u_--u_+)}{2} e^{-\frac{\lbar(u_--u_+)\abs{x}}{\e}}
\le u_--\util(x)
\le \frac{(u_--u_+)}{2} e^{-\frac{\lubar(u_--u_+)\abs{x}}{2\e}}, \qquad \forall x\le0,
\]
\[
\frac{(u_--u_+)}{2} e^{-\frac{\lbar(u_--u_+)\abs{x}}{\e}}
\le \util(x)-u_+
\le \frac{(u_--u_+)}{2} e^{-\frac{\lubar(u_--u_+)\abs{x}}{2\e}}, \qquad \forall x\ge0,
\]
\[
\frac{\lubar(u_--u_+)^2}{4\e} e^{-\frac{\lbar(u_--u_+)\abs{x}}{\e}}
\le -\util'(x)
\le \frac{\lbar(u_--u_+)^2}{2\e} e^{-\frac{\lubar(u_--u_+)\abs{x}}{2\e}}, \qquad \forall x\in\RR,
\]
where \(\lubar=\sqrt{2}-1\) and \(\lbar=1\).
\end{theorem}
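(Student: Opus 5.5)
We reduce the third-order profile equation to a first-order scalar ODE in the phase plane of the monotone profile. Then we trap its solution between two explicit multiples of \(w(1-w)\), and the three exponential estimates follow by elementary integration.

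\textbf{Reduction.} Integrate \eqref{visS} once, using \(\util\to u_\pm\), \(\util'\to0\) and \eqref{sigma}:
\[
\e\util'-\d\util''=\tfrac12(\util-u_-)(\util-u_+).
\]
Set \(w=(\util-u_+)/(u_--u_+)\in(0,1)\), \(y=(u_--u_+)x/\e\) and \(a=\d(u_--u_+)/\e^2\in(0,1/2]\) by \eqref{cond1}. The equation becomes
\[
w_y-a w_{yy}=-\tfrac12 w(1-w).
\]
Since the profile is strictly monotone with \(w_y<0\), we may write \(w_y=-g(w)\) with \(g>0\) on \((0,1)\). Then \(w_{yy}=g\,g'(w)\), so
\[
a\,g\,g'(w)=\tfrac12 w(1-w)-g,
\]
with \(g(0^+)=g(1^-)=0\).

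At \(w=0\), linearizing \(g\approx kw\) gives \(k(1+ak)=\tfrac12\), which has a unique positive root \(k=\frac{\sqrt{1+2a}-1}{2a}\in[\sqrt2-1,\tfrac12)\). Hence the profile corresponds to the unique orbit of this singular ODE emanating from \(w=0\) with slope \(k\). We view it as a trajectory traced with \(w\) increasing from \(0\) to \(1\); it is the profile of Bona--Schonbek, so it reaches \(w=1\).

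\textbf{Key claim.} With \(h(w)=w(1-w)\), we aim to show
\[
\lubar\,h(w)\le g(w)\le \lbar\,h(w)\quad\text{on }(0,1),\qquad \lubar=\sqrt2-1,\ \lbar=1.
\]
Near \(w=0\) this holds strictly, since \(\lubar\le k<1\). We argue by first crossing as \(w\) increases.

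\emph{Upper barrier.} If \(g=h\) at some \(w\), the ODE gives \(g'=-1/(2a)\). Crossing from below to above would require \(g'\ge h'=1-2w\), that is \(w\ge\tfrac12+\tfrac1{4a}\ge1\). This is impossible.

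\emph{Lower barrier.} If \(g=\lubar h\), then \(g'=(\tfrac12-\lubar)/(a\lubar)\). Crossing from above to below would require \(g'<\lubar(1-2w)\). It suffices that
\[
\tfrac12-\lubar\ge a\lubar^2 \quad\text{for all } a\le\tfrac12,
\]
i.e. \(\lubar^2+2\lubar-1\le0\), which holds exactly when \(\lubar\le\sqrt2-1\). Strict versus non-strict inequalities at tangency need a short perturbation argument, for instance comparing with \(\lubar-\eta\) and letting \(\eta\to0\).

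\textbf{Integration.} The normalization \(\util(0)=\frac{u_-+u_+}2\) means \(w(0)=\tfrac12\).
\begin{itemize}
\item For \(y\ge0\) we have \(w\le\tfrac12\), so \(1-w\in[\tfrac12,1]\). Then \(-\lbar w\le w_y\le-\tfrac{\lubar}{2}w\). Gronwall from \(y=0\) gives \(\tfrac12e^{-\lbar y}\le w\le\tfrac12e^{-\lubar y/2}\), which is the second estimate.
\item For \(y\le0\), apply the same argument to \(1-w\), which satisfies \(\lubar\,w(1-w)\le(1-w)_y\le\lbar\,w(1-w)\) with \(w\in[\tfrac12,1]\). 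This gives the first estimate.
\item For the derivative, \(-\util'=\frac{(u_--u_+)^2}{\e}g(w)\). Use \(\lubar h\le g\le h\) together with \(h\ge\tfrac12\min(w,1-w)\) and \(h\le\min(w,1-w)\), and insert the previous bounds. This yields the third estimate with constants \(\lubar/4\) and \(\lbar/2\).
\end{itemize}

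\textbf{Main obstacle.} The delicate step is the lower barrier. It is sharp exactly at \(w=0\), \(a=\tfrac12\), which is where \(\sqrt2-1\) comes from. The argument therefore needs care at the singular endpoint \(w=0\), including justifying that the profile orbit starts inside the region with slope \(k\ge\lubar\). The other delicate point is the identification of the profile with this unique orbit.
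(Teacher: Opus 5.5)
Your proof is correct and follows essentially the paper's route: you write $-\util'$ as a function of $\util$ (your $g(w)$ is the paper's $h(a)$ after rescaling) and compare it with $\lambda(u_--\util)(\util-u_+)$ via the sign of the slope difference at a contact point (the paper's linear function $g_\lambda$), where $\sqrt2-1$ and $1$ come from exactly the inequalities you derive, and then you integrate by Gr\"onwall. The one notable difference is that your first-contact argument from the saddle end $w=0$, run directly at $\lbar=1$, uses only the slope at $u_+$, whereas the paper's Case~2 (with $\lambda>1$) also relies on the endpoint slope $h'(s)$ at $u_-$, i.e.\ on selecting the smaller root of $\delta\mu^2-\varepsilon\mu+s=0$, so your version is slightly more self-contained.
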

\begin{remark}
All monotone shocks, that is, traveling wave solutions to \eqref{burgers} satisfying \eqref{cond1}, exhibit the exponential decay described in Theorem \ref{thm:exp}.
However, the proofs of Lemma \ref{lem:key} and Theorem \ref{thm:exp} show that the decay rates, i.e., \(\lubar\) and \(\lbar\), depend on the parameters \((\e,\d,u_\pm)\).
In other words, sharper exponential tails can be obtained under a more restrictive regime than \eqref{cond1}.
\end{remark}

Throughout the rest of the paper, we henceforth assume that \(\d>0\).
If \(\d<0\), the change of variables \(x\to -x, u\to -u, \d\to -\d\) yields the same result.
Moreover, due to the Galilean invariant transformation \(u(t,x-\s t)+\s \to u(t,x)\), we may also assume \(\s=0\) without loss of generality, i.e., the far field states are given by
\begin{equation} \label{stny}
u_\pm = \lim_{x\to\pm\infty} \util(x) = \mp s
\end{equation}
for some \(s>0\), in accordance with the Rankine--Hugoniot condition \eqref{sigma}.

\section{Preliminaries}
\setcounter{equation}{0}
%\subsection{Evolution of \(L^2\)-Energy}
%To prove the contraction property \eqref{L2-stable}, we demonstrate that the %time-derivative of the \(L^2\)-norm is non-positive.
%The following lemma characterizes the quadratic structure of 
%\[
%\frac{d}{dt}\frac{1}{2}\int_\RR (u^X-\util)^2 dx = \frac{d}{dt}\frac{1}%{2}\int_\RR (u-\util^{-X})^2 dx
%\]
We first observe the time-derivative of the \(L^2\)-norm.
We denote \(u^X(t,x) = u(t,x+X(t))\).
\begin{lemma} \label{lem:RHS}
Let \(\util\) denote the viscous-dispersive shock with \eqref{visS} and \eqref{stny}.
Then, for any solution \(u\in\Xcal_T\) to \eqref{burgers} and any Lipschitz function \(X\colon [0,T]\to\RR\), the following holds.
\begin{equation} \label{RHS}
\frac{d}{dt}\frac{1}{2}\int_\RR (u^X-\util)^2 dx
=\dot{X}(t) \int_\RR (u^X-\util)\util'dx
-\frac{1}{2}\int_\RR (u^X-\util)^2 \util' dx
-\e\int_\RR ((u^X-\util)_x)^2 dx.
\end{equation}
\end{lemma}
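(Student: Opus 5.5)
We set \(w \coloneqq u^X-\util\) and compute \(\frac{d}{dt}\frac12\int_\RR w^2\,dx\) by direct substitution of the equations, followed by integration by parts. Recall that \(\s=0\) by \eqref{stny}, so \(\util\) is stationary and satisfies \(f(\util)_x=\e\util''-\d\util'''\).

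\textbf{Step 1: equation for \(w\).} Since \(u^X(t,x)=u(t,x+X(t))\), the chain rule gives
\[
\partial_t u^X = u_t(t,x+X)+\dot X\,u_x(t,x+X) = -f(u^X)_x+\e u^X_{xx}-\d u^X_{xxx}+\dot X u^X_x .
\]
Subtracting the profile equation yields
\[
w_t = -\big(f(u^X)-f(\util)\big)_x+\e w_{xx}-\d w_{xxx}+\dot X\,(w_x+\util').
\]

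\textbf{Step 2: multiply by \(w\) and integrate.} This is justified because \(w\in C([0,T];H^1)\cap L^2(0,T;H^2)\); the term \(\d w w_{xxx}\) only needs \(H^2\) in space for a.e. \(t\), and one can first argue for smooth approximations and then pass to the limit. We then evaluate each term.
\begin{itemize}
\item The viscous term gives \(\e\int w w_{xx} = -\e\int w_x^2\).
\item The dispersive term gives \(\int w w_{xxx} = -\int w_x w_{xx} = -\frac12\int (w_x^2)_x = 0\).
\item The shift term gives \(\dot X\int w w_x = 0\), leaving \(\dot X\int w\,\util'\).
\item For the flux term, with \(f(u)=u^2/2\) we have \(f(u^X)-f(\util)=\util w+\frac{w^2}{2}\). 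Hence
\[
-\int w\big(\util w+\tfrac{w^2}{2}\big)_x = \int w_x\big(\util w+\tfrac{w^2}{2}\big)=\int \util\,\big(\tfrac{w^2}{2}\big)_x+\int \big(\tfrac{w^3}{3}\big)_x = -\frac12\int \util' w^2 .
\]
\end{itemize}
Summing these contributions gives exactly \eqref{RHS}.

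\textbf{Main obstacle.} The computation itself is elementary; the only delicate point is regularity. Boundary terms vanish because \(w(t,\cdot)\in H^1\), so \(w\to0\) at infinity, and because \(\util'\) decays. The identity \(\int w w_{xxx}=0\) with \(w\in H^2\) requires approximation by smooth functions, since \(w_{xxx}\) is only a distribution. The time derivative is understood in the weak sense, i.e. the identity holds for a.e. \(t\) after integrating in time. I would handle this by mollifying \(w\) in \(x\), or by proving the identity in integrated form, and then passing to the limit using the \(L^2(0,T;H^2)\) bound.
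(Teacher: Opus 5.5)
Your proposal is correct and is essentially the standard direct energy computation (substitute the equations for \(u^X\) and \(\util\), multiply by \(w=u^X-\util\), integrate by parts). The paper does not prove this lemma itself but defers to exactly this computation via \cite[Appendix B]{CEKS} and \cite{KangVasseur,Hur}, and your remarks on justifying it for a.e.\ \(t\) by approximation are adequate. The one slip is cosmetic: \(\int w_x\frac{w^2}{2}\,dx=\int\big(\frac{w^3}{6}\big)_x\,dx\), not \(\big(\frac{w^3}{3}\big)_x\), which is harmless since the term integrates to zero.
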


\begin{pf}
The proof does not depend on the critical parameter \(\frac{\d(u_--u_+)}{2\e^2}\) in \eqref{cond1}, and thus is given in the companion paper \cite[Appendix B]{CEKS}.
We also refer the readers to \cite{Hur,KangVasseur}.
\end{pf}

%\subsection{Poincar\'e Type Inequality}
\vspace{2mm}
The proof of the contraction property relies on the following Poincar\'e-type inequality, which enables us to exploit the shift function and the dissipation term.
\begin{lemma}\label{KV_lemma} \cite[Lemma 2.9]{acon_i_2}
For any function \(f\colon[a,b]\to\RR\) satisfying \(\int_a^b (y-a)(b-y)\abs{f'}^2 dy<\infty\),
\begin{equation} \label{KV}
\int_a^b f^2dy
\le \frac{1}{2}\int_a^b(y-a)(b-y)\abs{f'}^2 dy
+\frac{1}{b-a} \Big(\int_a^b f dy\Big)^2.
\end{equation}
\end{lemma}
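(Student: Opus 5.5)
The approach is to reduce to the Legendre operator on the symmetric interval \([-1,1]\) and use its spectral gap. First I rescale. With \(y=a+\frac{b-a}{2}(s+1)\), \(s\in[-1,1]\), one has \((y-a)(b-y)=\frac{(b-a)^2}{4}(1-s^2)\), \(dy=\frac{b-a}{2}ds\) and \(f_y=\frac{2}{b-a}f_s\). Substituting, every term of \eqref{KV} picks up the common factor \(\frac{b-a}{2}\). The claim is therefore equivalent to
\[
\int_{-1}^1 f^2\,ds \le \frac12\int_{-1}^1 (1-s^2)\abs{f'}^2\,ds + \frac12\Big(\int_{-1}^1 f\,ds\Big)^2 .
\]
Here \(\frac12(\int f)^2\) is exactly \(\norm{\Pi_0 f}_{L^2}^2\), where \(\Pi_0\) is the \(L^2\)-projection onto constants.

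Next I expand \(f\) in Legendre polynomials \(P_n\). These form a complete orthogonal basis of \(L^2(-1,1)\) and are eigenfunctions of \(\mathcal L f=-((1-s^2)f')'\) with eigenvalues \(n(n+1)\). Write \(f=\sum_n c_nP_n\). Integrating by parts, and using that the weight \(1-s^2\) vanishes at \(\pm1\), I get
\[
\int_{-1}^1(1-s^2)f'P_n'\,ds=n(n+1)\int_{-1}^1 fP_n\,ds .
\]
Combining this with the orthogonality of the \(P_n'\) in the weighted inner product \(\int(1-s^2)\,\cdot\,\cdot\,ds\) gives
\[
\int(1-s^2)\abs{f'}^2\,ds=\sum_n n(n+1)c_n^2\norm{P_n}^2\ge 2\sum_{n\ge1}c_n^2\norm{P_n}^2 .
\]
Since \(\sum_{n\ge1}c_n^2\norm{P_n}^2=\int f^2-\frac12(\int f)^2\), this is the desired inequality, and the constant \(\frac12\) is sharp because equality holds for \(f=s\).

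The main obstacle is justifying these manipulations under the sole hypothesis \(\int(y-a)(b-y)\abs{f'}^2<\infty\). I would proceed in three steps.
\begin{enumerate}
\item Show that such an \(f\) is in \(L^2\). By Cauchy--Schwarz, \(\abs{f(s)-f(0)}\le C\big(\log\frac{1}{1-\abs{s}}\big)^{1/2}\), which is square integrable.
\item Justify the integration by parts on \([-1+\eta,1-\eta]\). The boundary terms \((1-s^2)fP_n'\) tend to zero as \(\eta\to0\), at least along a subsequence, because \((1-s^2)\abs{f}\) is bounded by \((1-s^2)\sqrt{\log\frac{1}{1-\abs{s}}}\to0\).
\item Conclude the weighted Parseval identity. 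Bessel's inequality in the weighted space already gives the \(\ge\) direction, which is all that is needed.
\end{enumerate}
Alternatively, I would prove the inequality for smooth \(f\) on the closed interval and pass to the limit by density in the weighted energy norm.
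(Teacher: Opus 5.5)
Your proof is correct and takes essentially the same approach as the source. The paper gives no proof of its own but quotes \cite[Lemma 2.9]{acon_i_2}, which uses the same rescaling and the same expansion in (shifted) Legendre polynomials; there the constant $\frac12$ is the reciprocal of the first nonzero eigenvalue $n(n+1)=2$ of $-((1-s^2)f')'$. Your regularity steps (the logarithmic bound giving $f\in L^2$, the vanishing of $(1-s^2)fP_n'$ at $\pm1$, which holds without passing to a subsequence, and Bessel's inequality in the weighted space) correctly supply the details under the stated hypothesis.
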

%Note that the coefficient \(\frac{1}{2}\) is independent of the length of the domain \([a,b]\); see \cite[Lemma 2.9]{acon_i_2}.
%The first term on the right-hand side arises from the dissipation, while the second term arises from the shift term.
Note that this lemma has proven to be a powerful tool in the study of the stability of viscous shocks \cite{acon_i_2,KV}, and composite wave patterns \cite{KVW-ADV,KVW-ARMA}.

%\subsection{Estimates for the Shock Derivatives}
\vspace{2mm}
Now we derive estimates for the derivatives of the shock profiles, which will be useful later.
More precisely, in the proof of Theorem \ref{thm:main}, these estimates will be used to transfer the dissipation into the first term on the right-hand side of \eqref{KV}.
Moreover, these together with a standard Gr\"onwall argument yield the desired exponential decays.

\begin{lemma} \label{lem:key}
Assume that \(0 < 2\d(u_--u_+) = 4\d s \le \e^2\).
Then, the following holds:
\begin{equation} \label{key:ineq}
\lubar (s-\util(x))(\util(x)+s)
\le -\e \util'(x)
\le \lbar (s-\util(x))(\util(x)+s), \qquad \forall x\in\RR,
\end{equation}
where \(\lubar = \sqrt{2}-1\) and \(\lbar = 1\).
\end{lemma}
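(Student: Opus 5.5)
The plan is to reduce \eqref{visS} to a first-order ODE in the phase plane and run a barrier (maximum-principle) argument for the ratio \(-\e\util'/(s^2-\util^2)\).

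\textbf{Reduction to a first-order ODE.} With \(\s=0\) and \(u_\pm=\mp s\), I integrate \eqref{visS} once from \(-\infty\), using \(\util\to s\) and \(\util',\util''\to0\). This gives
\[
\d\util''=\e\util'+\tfrac12\big(s^2-\util^2\big).
\]
Since the profile is strictly monotone (\(\util'<0\)), I parametrize by \(u=\util(x)\in(-s,s)\). I set \(g(u):=-\e\util'(x)>0\) and \(\kappa:=\d/\e^2\), so the hypothesis reads \(\kappa\le 1/(4s)\). Using \(\util''=\util'\,\frac{d\util'}{du}\), the second-order ODE becomes
\[
\kappa\, g\,g'(u)=\tfrac12(s^2-u^2)-g .
\]
Next I write \(g(u)=R(u)(s^2-u^2)\). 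The claim \eqref{key:ineq} is then exactly \(\lubar\le R\le \lbar\) on \((-s,s)\). Substituting \(g'=R'(s^2-u^2)-2uR\) and dividing by \(s^2-u^2\) gives
\[
\kappa R R'(u)\,(s^2-u^2)=\tfrac12-R+2\kappa u R^2 .
\]

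\textbf{Endpoint value of \(R\).} I need the value of \(R\) at \(u=-s\). The equilibrium \(u_+=-s\) is a saddle, with linearization \(\d v''-\e v'-sv=0\) for \(v=\util+s\). The profile approaches it along the decaying direction \(r_-=(\e-\sqrt{\e^2+4\d s})/(2\d)\). Hence \(g\sim \e|r_-|(u+s)\), and therefore
\[
R(-s^+)=\frac{\e|r_-|}{2s}=\frac{\e}{\e+\sqrt{\e^2+4\d s}}\in\big[\sqrt2-1,\tfrac12\big),
\]
where the lower bound uses \(4\d s\le\e^2\). At the other endpoint \(u_-=s\), which is an unstable node, I only need \(R\) to stay bounded. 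That will follow from the barriers themselves.

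\textbf{Barrier argument.} I follow \(R\) as \(u\) increases from \(-s\) to \(s\).

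For the upper barrier, at any point with \(R=1\) the right-hand side equals \(-\tfrac12+2\kappa u\le -\tfrac12+2\kappa s\le0\). Since \(R\) starts below \(\tfrac12<1\), it cannot cross \(1\) upward.

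For the lower barrier, at any point with \(R=\lubar\) the right-hand side equals \(\tfrac12-\lubar+2\kappa u\lubar^2\). This is at least \(\tfrac12-\lubar-\tfrac12\lubar^2\), because \(u\ge-s\) and \(\kappa s\le\tfrac14\). The last quantity vanishes exactly when \(\lubar^2+2\lubar-1=0\), that is, at \(\lubar=\sqrt2-1\). So \(R'\ge0\) whenever \(R=\lubar\), and \(R\) cannot cross \(\lubar\) downward.

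Undoing the substitution \(g=R(s^2-u^2)\) then gives \eqref{key:ineq}.

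\textbf{Main obstacle.} The delicate step is the lower barrier in the borderline case. There the right-hand side vanishes exactly when \(u=-s\) and \(\kappa=1/(4s)\), and \(R(-s^+)\) may equal \(\lubar\), so a naive first-crossing argument has no strictness. I would handle this by a perturbation: prove the bound with \(\lubar-\eta\) in place of \(\lubar\) for small \(\eta>0\). In that case the right-hand side is strictly positive at \(R=\lubar-\eta\), uniformly in \(u\in[-s,s]\), and \(R(-s^+)>\lubar-\eta\) strictly. A first-crossing contradiction then works cleanly, and I let \(\eta\to0\) at the end.

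The same device handles the upper barrier near \(u=s\), where the sign is only non-strict. There I would compare with \(1+\eta\) instead of \(1\).

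I would also justify the endpoint asymptotics, namely the limit \(R(-s^+)\) and the finiteness of \(R\) near \(s\), by standard stable/unstable manifold theory for the planar system in \((\util,\util')\).
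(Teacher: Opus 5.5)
Your argument is correct once one side-remark is fixed (see below), and it is a different packaging of the paper's computation. The paper compares $h(a)=\util'$ with $p_\l(a)=\frac{\l}{\e}(a^2-s^2)$ for each fixed $\l<\lubar$ or $\l>\lbar$. It computes both endpoint slopes $h'(-s)$ and $h'(s)$, assumes a violation, and extracts crossing points $b<c$ of $h-p_\l$. At any crossing, $h'-p_\l'$ equals a decreasing linear function $g_\l(a)$, and the signs of $g_\l$ give a contradiction; the paper then lets $\l\to\lubar$ or $\l\to\lbar$. Your $F(u,\l)=\frac12-\l+2\kappa u\l^2$ is exactly $-\frac{\d\l}{\e}g_\l(u)$, so the decisive sign condition is the same. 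The difference is that you write an ODE for the ratio $R$ and prove forward invariance of the strip $\lubar\le R\le 1$ directly at the extremal values, using only the saddle endpoint $u=-s$. This buys three things. First, you need no information at the node $u=s$, which degenerates when $\e^2=4\d s$. Second, the endpoint slope is justified by the stable manifold theorem; the paper reads $h'(\pm s)$ off the limit of the equation for $h'$, tacitly assuming these limits exist. Third, the constants become transparent: in the borderline case $4\d s=\e^2$, $F(-s,\mu)=\frac12-\mu-\frac12\mu^2$ has positive root $\sqrt2-1$, and $F(s,\mu)=\frac12(1-\mu)^2$ vanishes only at $\mu=1$.

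\textbf{One repair.} The proposed $1+\eta$ device for the upper barrier does not work. When $4\d s=\e^2$, one has $F(u,1+\eta)=-\frac12-\eta+\frac{u}{2s}(1+\eta)^2\to\frac{\eta^2}{2}>0$ as $u\to s$, so $R=1+\eta$ is not a barrier near $u=s$. This is precisely why the paper, working with $\l>1$, needs the slope $h'(s)$ and the second crossing point $c$. No perturbation is needed, however. A first contact point $u_0$ with $R(u_0)=1$ lies in the open interval $(-s,s)$, where $F(u_0,1)=-\frac12+2\kappa u_0<0$ strictly, which contradicts $R'(u_0)\ge 0$. Your $\lubar-\eta$ device for the lower barrier is valid. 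It can also be avoided: $F(u,R)\ge\frac12-R-\frac12R^2>0$ whenever $0<R<\lubar$, so $R$ is increasing while below $\lubar$ and cannot drop below its limit $R(-s^+)\ge\lubar$.
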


%\begin{remark}
%In the absence of dispersion, i.e., when \(\d=0\), the shock derivative satisfies 
%\[
%-\e \util' = \frac{1}{2}(s-\util)(s+\util),
%\]
%which is no longer available in the present setting (see \eqref{t0}).
%Nevertheless, Lemma \ref{lem:key} shows that the shock derivatives remain comparable with \((s-\util)(s+\util)\), from which the exponential decay properties in Theorem \ref{thm:exp} follow.
%Further, the lower bound of \((-\e\util')\) in \eqref{key:ineq} will play a crucial role in the proof of Theorem \ref{thm:main}.
%\end{remark}

\begin{pf}
The proof relies on the contradiction argument developed in the companion paper \cite{CEKS}.
To this end, we first introduce a parameterization in terms of \(a=\util\) and define the following functions 
\[
h(a)\coloneqq \util' = \util_x\quad\hbox{and}\quad
p_\l(a) = \frac{\l}{\e} (a-s)(a+s) \ \ \hbox{for any } \l>0.
\]
The function \(h\) is well defined as the shock is monotone.
Under this parametrization, the desired inequality \eqref{key:ineq} can be rewritten into the following form:
\begin{equation} \label{key:ineq2}
p_{\lbar}(a) \le h(a) \le p_{\lubar}(a), \qquad \forall a\in(-s,s).
\end{equation}
To proceed, we examine the above functions at the endpoints as follows: for any \(\l>0\),
\begin{equation} \label{h-p}
h(-s)=p_\l(-s)=0, \qquad
h(s)=p_\l(s)=0.
\end{equation}
Now we also investigate their derivatives \(h'\) and \(p'\).
We integrate \eqref{visS} over \((\pm\infty,x)\) to find that
\begin{equation} \label{t0}
\e\util_x -\d\util_{xx} = \frac{1}{2}(\util^2-s^2).
\end{equation}
Dividing it by \(\util_x\), we obtain
\begin{equation} \label{h'}
h'(a)
= \frac{d\util_x}{d\util}
= \frac{\util_{xx}}{\util_x}
= \frac{1}{\d} \Big[\e - \frac{1}{2h(a)}(a-s)(a+s)\Big].
\end{equation}
Thus, we have 
\[
\lim_{a\to-s}h'(a)
=\lim_{a\to-s}\frac{1}{\d} \Big[\e - \frac{(a-s)}{2}\frac{(a+s)}{h(a)}\Big], \qquad
\lim_{a\to s}h'(a)
=\lim_{a\to s}\frac{1}{\d} \Big[\e - \frac{(a+s)}{2}\frac{(a-s)}{h(a)}\Big].
\]
It follows that 
\[
h'(-s) = \frac{1}{\d} \Big[\e - \frac{(a-s)}{2h'(-s)}\Big], \qquad
h'(s) = \frac{1}{\d} \Big[\e - \frac{(a+s)}{2h'(s)}\Big].
\]
This implies that
\[
\d (h'(-s))^2 - \e (h'(-s)) -s =0, \qquad
\d (h'(s))^2 - \e (h'(s)) +s =0.
\]
Then, since \(h'(-s)<0\) and \eqref{h'} indicates that \(h'(a)<\frac{\e}{\d}\) on \((-s,s)\), we have 
\[
h'(-s) = \frac{\e-\sqrt{\e^2+4\d s}}{2\d}, \qquad
h'(s) = \frac{\e-\sqrt{\e^2-4\d s}}{2\d}.
\]
Recall that \(\e^2 \ge 4\d s\).
We now distinguish two cases: \(\l<\lubar = \sqrt{2}-1\) and \(\l>\lbar = 1\).

\vspace{2mm}
\case{1} We consider the first case when \(\l<\lubar = \sqrt{2}-1\).\\
Since we have
\[
\l < \sqrt{2}-1 = \frac{1}{1+\sqrt{2}}
< \frac{\e}{\e+\sqrt{\e^2+4\d s}},
\]
it holds that
\begin{equation} \label{hp-ns}
- p_\l'(-s) = 2\frac{\l}{\e}s < \frac{2s}{\e+\sqrt{\e^2+4\d s}}
= -\frac{\e-\sqrt{\e^2+4\d s}}{2\d} = -h'(-s).
\end{equation}
Moreover, since we have 
\[
\l < \frac{1}{2}
< \frac{\e}{\e+\sqrt{\e^2-4\d s}},
\]
it follows that
\begin{equation} \label{hp-s}
p_\l'(s) = 2\frac{\l}{\e}s < \frac{2s}{\e+\sqrt{\e^2-4\d s}}
=\frac{\e-\sqrt{\e^2-4\d s}}{2\d} = h'(s).
\end{equation}

Now we fix \(\l \in (0,\lubar)\), and for the sake of contradiction, we suppose that there exists \(a\in(-s,s)\) such that \(h(a)>p_\l(a)\).
The observations \eqref{h-p}, \eqref{hp-ns} and \eqref{hp-s} collectively lead to the following: there exist two points \(b\) and \(c\) with \(-s<b<c<s\) such that 
\begin{align*}
&h(b)=p_\l(b), &&h'(b)-p_\l'(b) \ge 0,
&&h(c)=p_\l(c), &&h'(c)-p_\l'(c) \le 0.
\end{align*}
%\begin{align*}
%&h(b)=p_\l(b), &&h'(b)-p_\l'(b) \ge 0\\
%&h(c)=p_\l(c), &&h'(c)-p_\l'(c) \le 0.
%\end{align*}
Then, we consider a linear function \(g_\l:[-s,s]\to\RR\) which is defined by
\[
g_\l(a) \coloneqq
\frac{\e}{\d} \Big(1-\frac{1}{2\l}\Big) - 2\frac{\l}{\e}a.
\]
Since \(h(b)=p(b)\) and \(h(c)=p(c)\), using \eqref{h'}, we obtain
\begin{align*}
g_\l(b)&=
\frac{1}{\d}\Big(\e-\frac{\e}{2\l}\Big)
-2\frac{\l}{\e}b
=\frac{1}{\d}\bigg(\e-\frac{(b-s)(b+s)}{2p_\l(b)}\bigg)
-2\frac{\l}{\e}b
=h'(b)-p_\l'(b) \ge 0,\\
g_\l(c)&=
\frac{1}{\d}\Big(\e-\frac{\e}{2\l}\Big)
-2\frac{\l}{\e}c
=\frac{1}{\d}\bigg(\e-\frac{(c-s)(c+s)}{2p_\l(c)}\bigg)
-2\frac{\l}{\e}c
=h'(c)-p_\l'(c) \le 0.
\end{align*}
Moreover, since \(\frac{\l^2}{1-2\l}<1\le\frac{\e^2}{4\d s}\), we have
\[
g_\l(-s)
=\frac{\e}{\d} \Big(1-\frac{1}{2\l}\Big) + 2\frac{\l}{\e}s < 0.
\]
This contradicts to the fact that the function \(g_\l\) is decreasing.
Thus, for any \(\l\in(0,\lubar)\), we obtain
\[
h(a) \le p_\l(a), \qquad \forall a\in(-s,s).
\]
Therefore, we conclude that \(h(a) \le p_{\lubar}(a)\) for all \(a\in(-s,s)\).

\vspace{2mm}
\case{2} We consider the second case when \(\l>\lbar = 1\).\\
This can be handled similarly.
Note that
\[
\l > \lbar > \frac{1}{2} > \frac{\e}{\e+\sqrt{\e^2+4\d s}},
\]
from which the following can be obtained:
\begin{equation} \label{hp-ns0}
- p_\l'(-s) = 2\frac{\l}{\e}s > \frac{2s}{\e+\sqrt{\e^2+4\d s}}
= -\frac{\e-\sqrt{\e^2+4\d s}}{2\d} = -h'(-s).
\end{equation}
Then, using
\[
\l > \lbar = 1
\ge \frac{\e}{\e+\sqrt{\e^2-4\d s}},
\]
we obtain
\begin{equation} \label{hp-s0}
p_\l'(s) = 2\frac{\l}{\e}s > \frac{2s}{\e+\sqrt{\e^2-4\d s}}
=\frac{\e-\sqrt{\e^2-4\d s}}{2\d} = h'(s).
\end{equation}

Then we fix \(\l \in (\lbar,\infty)\), and we suppose that there exists \(a\in(-s,s)\) such that \(h(a)<p_\l(a)\).
Thanks to \eqref{h-p}, \eqref{hp-ns0} and \eqref{hp-s0}, we choose two points \(b\) and \(c\) with \(-s<b<c<s\) such that 
\begin{align*}
&h(b)=p_\l(b), &&h'(b)-p_\l'(b) \le 0,
&&h(c)=p_\l(c), &&h'(c)-p_\l'(c) \ge 0.
\end{align*}
%\begin{align*}
%&h(b)=p_\l(b), &&h'(b)-p_\l'(b) \le 0\\
%&h(c)=p_\l(c), &&h'(c)-p_\l'(c) \ge 0.
%\end{align*}
In the same way, we show that the linear function \(g_\l\) verifies
\[
g_\l(-s) > 0, \qquad
g_\l(b) \le 0, \qquad
g_\l(c) \ge 0,
\]
which contradicts to the linearity of the function \(g\).
Thus, for any \(\l\in(\lbar,\infty)\), we have
\[
h(a) \ge p_\l(a), \qquad \forall a\in(-s,s).
\]
Thus, we find that \(h(a) \ge p_{\lbar}(a)\) for all \(a\in(-s,s)\).
This completes the proof of \eqref{key:ineq2} and \eqref{key:ineq}.
\end{pf}

\section{Proof of Theorem \ref{thm:main}: Contraction Property}
\setcounter{equation}{0}
The key ingredient in the proof is the Poincar\'e-type inequality, Lemma \ref{KV_lemma}.
To facilitate its application, we transform the spatial variable \(x\) into a bounded variable \(z\).
To this end, we introduce the following variable: 
\begin{equation} \label{ztu}
z(x) \coloneqq \util(x), \qquad
dz = \util' dx,
\end{equation}
which is well defined due to the monotonicity of the shock profiles.
This change of variable carries the unbounded domain \(\RR\) to the interval \((-s,s)\).
Note that Lemma \ref{lem:key} will serve as a Jacobian estimate of the change of variable \eqref{ztu}.

\vspace{2mm}
We now recall the definition of the shift, and Lemma \ref{lem:RHS} implies that 
\[
\frac{d}{dt}\frac{1}{2}\int_\RR (u^X-\util)^2 dx
= -\frac{1}{4s} \Big(\int_\RR (u^X-\util)\util'dx\Big)^2
-\frac{1}{2}\int_\RR (u^X-\util)^2 \util' dx
-\e\int_\RR ((u^X-\util)_x)^2 dx.
\]
Note that while the first and third term on the right-hand side are non-positive, the second term is non-negative.
Then, we introduce \(w \coloneqq (u^X-\util) \circ z^{-1}\) and rewrite the first and second term in terms of \(z\) and \(w\) as follows:
\[
-\frac{1}{4s} \Big(\int_\RR (u^X-\util)\util'dx\Big)^2
=-\frac{1}{4s} \Big(\int_{-s}^s w dz\Big)^2, \qquad
-\frac{1}{2}\int_\RR (u^X-\util)^2 \util' dx
= \frac{1}{2} \int_{-s}^s w^2 dz.
\]
Moreover, for the third term, we apply Lemma \ref{lem:key} to find that 
\[
-\e\int_\RR ((u^X-\util)_x)^2 dx
=\e \int_{-s}^s (w_z)^2 \Big(\frac{dz}{dx}\Big) dz
=- \int_{-s}^s (-\e\util') (w_z)^2 dz
\le -\lubar \int_{-s}^s (s-\util)(\util+s) (w_z)^2 dz.
\]
Thus, using Lemma \ref{lem:RHS}, we conclude that
\begin{align*}
\frac{d}{dt}\frac{1}{2}\int_\RR (u^X-\util)^2 dx
&\le -\frac{1}{4s} \Big(\int_{-s}^s w dz\Big)^2
+\frac{1}{2} \int_{-s}^s w^2 dz
-\lubar \int_{-s}^s (s-\util)(\util+s) (w_z)^2 dz\\
&\le
- \Big(\lubar-\frac{1}{4}\Big)\int_{-s}^s (s-\util)(\util+s) (w_z)^2 dz \le 0.
\end{align*}
This immediately implies \eqref{L2-stable}.

\vspace{2mm}
It only remains to prove \eqref{timeasy} and \eqref{shiftasy}.
The argument deriving \eqref{timeasy} and \eqref{shiftasy} from \eqref{L2-stable} was introduced in previous works \cite{Hur,CEKS} and relies on the estimate provided by \eqref{L2-stable}, without using any properties of the shock.
Hence, once \eqref{L2-stable} has been shown, \eqref{timeasy} and \eqref{shiftasy} follow immediately.
We therefore omit the details.
This ends the proof of Theorem \ref{thm:main}. \qed

\subsection{Remark on Oscillatory Shocks}
In this subsection, we briefly explain why the oscillatory shock case is much more difficult than
the monotone shock case and how to overcome these difficulties in the companion paper \cite{CEKS}.
To begin with, since the derivative of the shock does not have a fixed sign, 
%the integrand of the following term in \eqref{RHS} does not have a definite sign either: 
%\[
%-\frac{1}{2}\int_\RR (u^X-\util)^2 \util' dx.
%\]
%On intervals where the shock is increasing, the above term gives a non-positive contribution to the right-hand side of \eqref{RHS}, while on the shock decreasing intervals, it gives a non-negative contribution.
%Thus, we can exploit this term on shock increasing intervals.
the variable introduced in \eqref{ztu} is not globally defined, which complicates the analysis.

\vspace{2mm}
The natural approach is to apply the Poincar\'e-type inequality in Lemma \ref{KV_lemma} on each monotone interval, in particular on decreasing intervals.
Note that the Poincar\'e-type inequality requires two terms (on the right-hand side in \eqref{KV}).
As seen in the proof for the monotone case, the first term comes from the dissipation term and the second term arises from the shift term.
To convert the dissipation term into the first term, the following inequality (the first inequality in \eqref{key}) is required: 
\[
\l_i (u_i-\util(x))(\util(x)-u_{i-1})
\le -\e \util'(x), \qquad \forall x\in\RR,
\]
where \(u_i\) and \(u_{i-1}\) are neighboring local maximum and local minimum, respectively.
Hence, such inequalities need to be established on all monotone intervals, and it turned out that the coefficient sequence \(\{\l_i\}\) grows exponentially as we move towards the left (see \cite[Proposition 4.1]{CEKS}).

\vspace{2mm}
Then we still need another term, i.e., the square of the average.
This is the most challenging part of \cite{CEKS}, as the shift part only yields the following `non-localized' squared average term:
\begin{equation} \label{non-l}
-\frac{1}{4s} \Big(\int_\RR (u^X-\util)\util'dx\Big)^2.
\end{equation}
This was sufficient in the monotone shock case, where the variable in \eqref{ztu} could be defined globally.
In the oscillatory shock case, however, we need to use the Poincar\'e-type inequality on each monotone interval; that is, the squared average is required on each monotone interval.
The authors developed an inductive argument to get the squared average term \cite{CEKS}.
The key observation in that argument is as follows: since the shock is oscillatory, we choose \(a,b\in\RR\) such that \(\util(a)=\util(b) \eqqcolon u_*\).
Then,
\[
\Big(\int_a^b (u^X-\util)\util' dx\Big)^2
=\Big(\int_a^b (u^X-\util)_x (\util-u_*) dx\Big)^2
\le \Big(\int_a^b \big((u^X-\util)_x\big)^2 dx\Big)\Big(\int_a^b (\util-u_*)^2 dx\Big).
\]
This shows that the dissipation term yields the squared average term on \((a,b)\).
Then, if the squared average is available on the increasing interval \(\subset (a,b)\), the squared average on the (complement) decreasing interval \(\subset (a,b)\) can be obtained.
Thus, the non-localized squared average term \eqref{non-l} is first used to deal with the rightmost decreasing interval, leaving an extra squared average term.
This extra term can be then transferred inductively to the left, using the above observation, which allows us to obtain the required squared average term on each monotone interval.

\vspace{2mm}
In the proof, we need \(L^2\) estimates of the form \(\int_a^b (\util-u_*)^2 dx\), since a smaller value of this yields a larger amount of squared average term on \((a,b)\).
These estimates allow us to obtain the required amount.
To this end, detailed structural properties of the oscillatory shocks were established in \cite{CEKS}.

\section{Proof of Theorem \ref{thm:exp}: Exponential Decay}
\setcounter{equation}{0}
Finally, we establish the exponential decay properties of the monotone shock profiles.
The proof is based on a standard Gr\"onwall argument (see, for instance, \cite{EE-CMS,EEKO-JDE,acon_i_2}) together with the estimates on the derivatives of the shocks established in Lemma \ref{lem:key}.

\vspace{2mm}
We assume that \(\util(0)=0 \,(=\frac{u_-+u_+}{2})\) without loss of generality and recall \eqref{key:ineq} in Lemma \ref{lem:key}:
\begin{equation} \label{key}
\lubar (u_--\util(x))(\util(x)-u_+)
\le -\e \util'(x)
\le \lbar (u_--\util(x))(\util(x)-u_+), \qquad \forall x\in\RR,
\end{equation}
where \(u_-=s\) and \(u_+=-s\).
Note that \(\util'(x)<0\) for all \(x\in\RR\).
Then, we observe
\[
2s \ge \util(x)-u_+ \ge \util(0)-u_+ = s, \qquad \forall x\le0.
\]
This together with \eqref{key} implies that
\begin{equation} \label{keyleft}
\lubar s (u_--\util(x))
\le -\e (u_--\util(x))'
\le 2\lbar s (u_--\util(x)), \qquad \forall x\le0.
\end{equation}
Then, Gr\"onwall's lemma shows that for all \(x\le0\),
\begin{equation} \label{expleft}
s e^{-2\lbar s\frac{\abs{x}}{\e}}
=(u_--\util(0)) e^{2\lbar s\frac{x}{\e}}
\le u_--\util(x)
\le (u_--\util(0)) e^{\lubar s\frac{x}{\e}}
= s e^{-\lubar s\frac{\abs{x}}{\e}}.
\end{equation}
On the other hand, for the right-half line, we observe
\[
2s \ge u_--\util(x) \ge u_--\util(0) = s, \qquad \forall x\ge0.
\]
Combined with \eqref{key}, we find that 
\begin{equation} \label{keyright}
\lubar s(\util(x)-u_+)
\le -\e (\util'(x)-u_+)
\le 2\lbar s(\util(x)-u_+), \qquad \forall x\ge0.
\end{equation}
We, again, apply Gr\"onwall's lemma to obtain that for all \(x\ge0\), 
\begin{equation} \label{expright}
s e^{-2\lbar s\frac{\abs{x}}{\e}}
=(u_--\util(0)) e^{-2\lbar s\frac{x}{\e}}
\le \util(x)-u_+
\le (u_--\util(0)) e^{-\lubar s\frac{x}{\e}}
= s e^{-\lubar s\frac{\abs{x}}{\e}}.
\end{equation}
Lastly, plugging \eqref{expleft} and \eqref{expright} into \eqref{keyleft} and \eqref{keyright}, we obtain 
\[
\lubar s^2 e^{-2\lbar s \frac{\abs{x}}{\e}}
\le -\e \util'(x)
\le 2\lbar s^2 e^{-\lubar s \frac{\abs{x}}{\e}}
\]
which completes the proof of Theorem \ref{thm:exp}. \qed
\vspace{2mm}
\noindent\textbf{Acknowledgement.}
The first author is partially supported by National Science Foundation (DMS-2306258).
The second and third authors were supported by Samsung Science and Technology Foundation under Project Number SSTF-BA2102-01.
The fourth author is partially supported by National Science Foundation (DMS-2206218).
%The first and fourth authors are partially supported by a SQuaRE at the American Institute of Mathematics.

%\vspace{2mm}
%\noindent\textbf{Declaration of competing interest.}
%The authors declared that they have no conflict of interest to this work.

%\vspace{2mm}
%\noindent\textbf{Data availability statement.}
%We do not analyze or generate any datasets, because our work proceeds within a theoretical and mathematical approach. 

\let\OLDthebibliography\thebibliography
\renewcommand{\thebibliography}[1]{
  \OLDthebibliography{#1}
  \setlength{\itemsep}{0.2pt}
  \setlength{\parskip}{0.2pt}
}

\bibliographystyle{plain}
\bibliography{reference}

\end{document}